%% This is file `elsarticle-template-1-num.tex',
%%
%% Copyright 2009 Elsevier Ltd
%%
%% This file is part of the 'Elsarticle Bundle'.
%% ---------------------------------------------
%%
%% It may be distributed under the conditions of the LaTeX Project Public
%% License, either version 1.2 of this license or (at your option) any
%% later version.  The latest version of this license is in
%%    http://www.latex-project.org/lppl.txt
%% and version 1.2 or later is part of all distributions of LaTeX
%% version 1999/12/01 or later.
%%
%% Template article for Elsevier's document class `elsarticle'
%% with numbered style bibliographic references
%%
%% $Id: elsarticle-template-1-num.tex 149 2009-10-08 05:01:15Z rishi $
%% $URL: http://lenova.river-valley.com/svn/elsbst/trunk/elsarticle-template-1-num.tex $
%%
\documentclass[preprint,12pt,3p]{elsarticle}

%--------------------------------------
% \usepackage[T2A]{fontenc}
% \usepackage[utf8]{inputenc}
% \usepackage[russian]{babel}
%--------------------------------------

%% Use the option review to obtain double line spacing
%% \documentclass[preprint,review,12pt]{elsarticle}

%% Use the options 1p,twocolumn; 3p; 3p,twocolumn; 5p; or 5p,twocolumn
%% for a journal layout:
%% \documentclass[final,1p,times]{elsarticle}
%% \documentclass[final,1p,times,twocolumn]{elsarticle}
%% \documentclass[final,3p,times]{elsarticle}
%% \documentclass[final,3p,times,twocolumn]{elsarticle}
%% \documentclass[final,5p,times]{elsarticle}
%% \documentclass[final,5p,times,twocolumn]{elsarticle}

\usepackage{graphicx}
%% or use the epsfig package if you prefer to use the old commands
%% \usepackage{epsfig}
\usepackage{subcaption}
%% The amssymb package provides various useful mathematical symbols
\usepackage{amssymb, amsmath}
%% The amsthm package provides extended theorem environments
\usepackage{amsthm}
\usepackage{color}
\usepackage{array}
\usepackage{tabu}

%% The amsthm package provides extended theorem environments
%% \usepackage{amsthm}

%% The lineno packages adds line numbers. Start line numbering with
%% \begin{linenumbers}, end it with \end{linenumbers}. Or switch it on
%% for the whole article with \linenumbers after \end{frontmatter}.
% \usepackage{lineno}

%Russian-specific packages
%--------------------------------------
% \usepackage[T2A]{fontenc}
% \usepackage[utf8]{inputenc}
% \usepackage[russian]{babel}
%--------------------------------------

%% natbib.sty is loaded by default. However, natbib options can be
%% provided with \biboptions{...} command. Following options are
%% valid:

%%   round  -  round parentheses are used (default)
%%   square -  square brackets are used   [option]
%%   curly  -  curly braces are used      {option}
%%   angle  -  angle brackets are used    <option>
%%   semicolon  -  multiple citations separated by semi-colon
%%   colon  - same as semicolon, an earlier confusion
%%   comma  -  separated by comma
%%   numbers-  selects numerical citations
%%   super  -  numerical citations as superscripts
%%   sort   -  sorts multiple citations according to order in ref. list
%%   sort&compress   -  like sort, but also compresses numerical citations
%%   compress - compresses without sorting
%%
%% \biboptions{comma,round}

% \biboptions{}

\journal{Regular \& Chaotic Dynamics}

%My own additional definitions
\newtheorem{theorem}{Theorem}[section]

\theoremstyle{definition}

\theoremstyle{remark}

\begin{document}

\begin{frontmatter}

%% Title, authors and addresses

\title{The method of averaging for the Kapitza-Whitney pendulum}

%% use the tnoteref command within \title for footnotes;
%% use the tnotetext command for the associated footnote;
%% use the fnref command within \author or \address for footnotes;
%% use the fntext command for the associated footnote;
%% use the corref command within \author for corresponding author footnotes;
%% use the cortext command for the associated footnote;
%% use the ead command for the email address,
%% and the form \ead[url] for the home page:
%%
%% \title{Title\tnoteref{label1}}
%% \tnotetext[label1]{}
%% \author{Name\corref{cor1}\fnref{label2}}
%% \ead{email address}
%% \ead[url]{home page}
%% \fntext[label2]{}
%% \cortext[cor1]{}
%% \address{Address\fnref{label3}}
%% \fntext[label3]{}

%% use optional labels to link authors explicitly to addresses:
%% \author[label1,label2]{<author name>}
%% \address[label1]{<address>}
%% \address[label2]{<address>}

\author[label5]{Ivan Polekhin}
\address[label5]{Steklov Mathematical Institute of the Russian Academy of Sciences}

% \address{California, United States}

\begin{abstract}
%% Text of abstract

A generalization of the classical Kapitza pendulum is considered: an inverted planar mathematical pendulum with a vertically vibrating pivot point in a time-periodic horizontal force field. We study the existence of forced oscillations in the system. It is shown that there always exists a periodic solution along which the rod of the pendulum never becomes horizontal, i.e. the pendulum never falls, provided the period of vibration and the period of horizontal force are commensurable. We also present a sufficient condition for the existence of at least two different periodic solutions without falling. We show numerically that there exist stable periodic solutions without falling.

\end{abstract}

\begin{keyword}
averaging \sep Kapitza's pendulum \sep Whitney's pendulum \sep forced oscillations \sep averaging on an infinite interval
%% keywords here, in the form: keyword \sep keyword

%% MSC codes here, in the form: \MSC code \sep code
%% or \MSC[2008] code \sep code (2000 is the default)

\end{keyword}

\end{frontmatter}

%%
%% Start line numbering here if you want
%%

\section{Introduction}

A planar inverted mathematical pendulum with a vibrating pivot point is a classical mechanical system. This dynamical system has been studied thoroughly by many authors starting from the works of A. Stephenson \cite{stephenson1908xx}, P.\,L.\,Kapitza \cite{kapitsa1951pendulum,kapitsa1951dynamic}, and N.\,N.\,Bogolyubov \cite{bogolyubov1950perturbation} on the stabilization of the vertical equilibrium, and ending with the work of D. Acheson \cite{acheson1995multiple}, where the existence of the so-called nodding solutions has been shown numerically. A more detailed overview of papers related to the subject, including the history of the problem and the role which this system played in nonlinear studies and applications, can be found in \cite{burd2007method,samoilenko1994nn,butikov2001dynamic,wright2017comparisons}.

A less known system, also related to pendulum dynamics, is an inverted pendulum with a horizontally moving pivot point. The law of motion of the base is assumed to be a given function of time. H. Whitney was the first who proposed that, for any given law of motion of the pivot, there exists an initial position of the pendulum such that the rod of the pendulum, being released from this position with zero generalized velocity, always remains above the horizontal line during the motion of the system (i.e. never falls) \cite{courant1996mathematics}. Rigorous proofs of this statement had been obtained later (see, for instance, \cite{broman1958mechanical,polekhin2014examples}). The history of this problem can be found in detail in \cite{srzednicki2019periodic}.

In the paper we consider a generalization of both the Kapitza and the Whitney pendulum: a system of an inverted pendulum with a rapidly vertically oscillating pivot point in a horizontal external non-autonomous force field. For the Whitney pendulum this horizontal force is the force of inertia corresponding to the horizontal motion of the base. The dynamics of the Kapitza-Whitney pendulum obeys the following equation

\begin{align}
    \label{eq1}
    \ddot x + \mu \dot x + (1 + \ddot f(t)) \sin x + h(t) \cos x = 0.
\end{align}
Here $x$ stands for the inclination of the rod in such a way that the position $x = \pi$ corresponds to the vertical upward configuration of the pendulum. The units of measurement are chosen so that the mass and the length of the pendulum and the gravity acceleration equal $1$; $\mu \geqslant 0$ is the viscous friction coefficient; functions $f(t)$ and $h(t)$ defines the vertical position of the pivot point and the horizontal external force, correspondingly. A typical example of function $f(t)$, that allows one to refer to the classical results on averaging, is the following law of motion
\begin{align}
    \label{eq_ft}
f(t) = \frac{1}{k} \sin (\omega k t).
\end{align}
Here $\omega$ is a given parameter that defines the frequency of the vibration. Everywhere below we assume that $\omega$ and $k$ are natural numbers and $k$ is relatively large ($1/k$ is a small parameter for the averaging).

If we put $h \equiv 0$ in \eqref{eq1}, then we obtain the equation for the Kapitza pendulum. If, on the other hand, we put $f \equiv 0$, then our equation coincides with the one for the Whitney pendulum. We will be interested in the case when $h \not\equiv 0$ and $\ddot f \not\equiv 0$. Moreover, we assume that these functions of time have common period, i.e. their frequencies are commensurable. For instance, in the simplest case, we can consider $2\pi$-periodic function $h(t)$. 
The main result of the paper is that for any sufficiently regular function $h(t)$ there exists a periodic solution of the system such that the rod never falls on the ground along this solution. Under some additional assumptions, we prove the multiplicity of such solutions: we show that there exist at least two different periodic solutions without falls. We also show numerically that, for some given $f(t)$ and $h(t)$, there exist stable periodic solutions without falls.

The main content of the paper is split into two sections. In the first part we prove the existence of a periodic solution without falls for the Kapitza-Whitney pendulum and present sufficient conditions for the existence of two such solutions. In the second part we numerically study the stability of periodic solutions without falls.

A classical theorem of N.N. Bogolyubov on averaging on a finite interval of time is the key ingredient of the proofs in \cite{bogolyubov1945statistical} (see also \cite{bogolyubov1961asymptotic}). We show how it is possible to move from the local results on averaging to the global ones. The correspondence between our results and the classical theorems on averaging is shortly outlined in the conclusion where we also present possible ways of generalization and development of our approach.

\section{Main results}
Let us consider the following system, a generalization of \eqref{eq1}:
\begin{align}
\begin{split}
\label{eq2}
    & \dot q = p - \varphi(kt) \sin q,\\
    & \dot p = -\mu p + [\mu \sin q + p \cos q] \varphi(kt) - \sin q - \frac{\varphi^2(kt)}{2} \sin 2q + h(q,p,t).
\end{split}
\end{align}
Here and below we assume that all functions are $C^\infty$-smooth. We also assume that $\varphi$ is a $T$-periodic function of its argument with zero average value (i.e. $\varphi(kt)$ has the period $T/k$), $h$ is a bounded and $T$-periodic in $t$ function. In order to obtain equation \eqref{eq1} with the vertical position term specified by the equation \eqref{eq_ft} from system \eqref{eq2}, one should put $\varphi(kt) = \omega \cos (\omega k t)$ and for $h(q, p, t)$ put $-h(t) \cos q$. Note that variable $q$ in \eqref{eq2} corresponds to $x$ in the original equation. The correspondence between equation \eqref{eq1} (Newton's law of motion) and system \eqref{eq2} (in a Hamiltonian form) is explained in detail in \cite{burd2007method}. The form of system \eqref{eq2} allows one to apply the classical theorem on averaging (for $h \equiv 0$) considering $1/k$ as the small parameter.

\begin{theorem}
Let $h$ satisfy the following inequalities for all $t$
\begin{align}
\label{eq3}
    h(\pi/2,0,t) < 1, \quad h(3\pi/2,0,t) > -1.
\end{align}
Then there exists $K$, such that for any natural number $k > K$ there is a $T$-periodic solution $(q(t), p(t))$ of \eqref{eq2} and $q(t) \in (\pi/2, 3\pi/2)$ for all $t$.
\end{theorem}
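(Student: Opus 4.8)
My plan is to combine the Bogolyubov averaging theorem (on a finite time interval) with a topological argument — a Ważewski-type or Brouwer fixed-point argument on the Poincaré map — to convert the averaging estimate into a genuine periodic orbit that stays in the strip $q \in (\pi/2, 3\pi/2)$.

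First I would identify the averaged system. Since $\varphi$ has zero mean and period $T/k$, averaging the right-hand side of \eqref{eq2} over the fast period (as $k \to \infty$) kills all the $\varphi$-dependent terms except the quadratic one, whose average is $\bar\varphi^2/2$ with $\bar\varphi^2 = \frac{1}{T}\int_0^T \varphi^2$. Thus the averaged equation is
\begin{align}
\label{eq:avg}
  \dot q = p, \qquad \dot p = -\mu p - \sin q - \tfrac{\bar\varphi^2}{2}\sin 2q + h(q,p,t),
\end{align}
which is exactly a (damped, forced) pendulum whose effective potential has a stable equilibrium near $q=\pi$ (the Kapitza stabilization), with the slow time $t$ still present through $h$. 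The key structural observation is that on the two vertical lines $q = \pi/2$ and $q = 3\pi/2$ the vector field of the averaged system (and, for $k$ large, of the full system) points \emph{inward}: at $q=\pi/2$ we have $\sin q = 1$, $\sin 2q = 0$, so $\dot p|_{p=0} = -1 + h(\pi/2,0,t) < 0$ by \eqref{eq3}, and symmetrically $\dot p|_{p=0} = 1 + h(3\pi/2,0,t) > 0$ at $q = 3\pi/2$; combined with $\dot q = p$ this makes the strip (suitably completed with $p$-bounds) a Ważewski set whose exit set is not a retract of it.

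The main step is then the topological argument. I would work directly with the full system \eqref{eq2} rather than the averaged one: build a compact "bend-back" region $W$ in the $(q,p)$-plane — a slight fattening of $\{\pi/2 \le q \le 3\pi/2\}$ truncated in $p$ — such that along $q = \pi/2$ and $q = 3\pi/2$ solutions of \eqref{eq2} leave $W$ immediately (this requires $k$ large so the fast oscillatory terms, which are $O(\varphi)$ but oscillate, do not spoil the sign of the time-averaged push; one handles this by the standard near-identity coordinate change that removes the $\varphi$-terms to leading order, à la \cite{burd2007method,bogolyubov1961asymptotic}). On such a Ważewski block the set of points whose forward trajectory eventually exits through the "bad" (lateral) part of the boundary is open, its complement is nonempty, and a standard fixed-point/Ważewski argument (as in Srzednicki's work on the Whitney pendulum, cf. \cite{srzednicki2019periodic,polekhin2014examples}) produces a $T$-periodic solution of \eqref{eq2} that never touches $q = \pi/2$ or $q = 3\pi/2$, hence stays in $(\pi/2,3\pi/2)$ for all time. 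Concretely: the Poincaré time-$T$ map is defined on the subset $U \subset W$ of initial conditions whose trajectory stays in $W$ up to time $T$; one shows $U$ is open, contains a set on which the map is "essential" in the appropriate homological sense (using that the lateral boundary is an exit set and is not a deformation retract of $W$), and concludes existence of a fixed point.

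The hard part, I expect, will be the uniformity in the topological block: one must show that a \emph{single} Ważewski block works for all $k > K$ and, more delicately, that the fast $\varphi(kt)$-terms — which are not small, only oscillatory — do not destroy the inward-pointing property on the lateral boundary over a full slow period. This is precisely where Bogolyubov's finite-interval averaging theorem enters: after the near-identity substitution $q = \tilde q + \tfrac{1}{k}u(\tilde q,\tilde p, kt) + \dots$ the transformed system is an $O(1/k)$ perturbation of \eqref{eq:avg}, the block is inward-pointing for \eqref{eq:avg} with a definite margin provided by the strict inequalities \eqref{eq3}, and choosing $K$ large makes the perturbation smaller than that margin uniformly on the compact block. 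One also needs to check the choice of $p$-truncation is consistent — i.e. that trajectories cannot escape through the top or bottom of $W$ without having been "caught," which follows from a Lyapunov/energy estimate for the averaged pendulum together with the friction term $-\mu p$ (and is unconditional if one simply takes the $p$-bounds large enough). Assembling these pieces gives the theorem.
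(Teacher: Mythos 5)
Your reduction to the averaged system, the role of the strict inequalities \eqref{eq3}, and the idea of a topological (Wa\.zewski/fixed--point) existence argument are all in the right spirit, but the step you yourself flag as the hard part --- making the lateral walls $q=\pi/2$, $q=3\pi/2$ into immediate-exit faces for the full system after a first-order near-identity change --- does not go through as stated, and this is a genuine gap. After the standard first-order averaging substitution the remainder is $O(1/k)$ \emph{in magnitude}, but it depends on the fast phase $kt$, so its time derivative is $O(1)$. The Wa\.zewski-block classification of the tangency points on a lateral face (the points where $\dot q=0$) requires a definite sign of $\ddot q$ there, and $\ddot q$ retains $O(1)$ oscillatory contributions: e.g. the term $\mu\,\varphi(kt)\sin q$ coming from the $\dot p$-equation, and the fast-time derivative of the transformation remainder. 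These are not reduced by increasing $k$, and they can exceed the margin $1-h(\pi/2,0,t)$ provided by \eqref{eq3} (for instance when $\mu\max|\varphi|$ is large), so "choose $K$ so the perturbation is smaller than the margin" is false at the pointwise level needed for an exit set. To repair it you would need at least second-order averaging (remainders $O(1/k^{2})$, whose fast derivatives are then $O(1/k)$), or a different device altogether. A second, smaller gap: your treatment of the top and bottom faces leans on the friction term $-\mu p$, but the theorem allows $\mu=0$, and simply enlarging the $p$-bounds does not make those faces entrance faces without friction; also, placing the walls exactly at $\pi/2,3\pi/2$ in transformed coordinates returns a solution that is only $O(1/k)$-close to the strip in the original coordinates, so the walls must be moved slightly inward, which requires extending \eqref{eq3} by continuity.

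For comparison, the paper avoids both difficulties by never asking the boundary to be exit-type for the full system. It multiplies the terms linear in $\varphi(kt)$ by a cutoff $\sigma(q)$ vanishing near $q=\pi/2,3\pi/2$; since $\sin 2q=0$ exactly at these values, the remaining Kapitza term is harmless on the boundary, and the constant functions $q\equiv\pi/2$ and $q\equiv 3\pi/2$ are lower and upper solutions of the modified equation \eqref{eq4} \emph{for every} $k$. This yields a $T$-periodic solution of the modified system inside the strip, together with an a priori bound on $p$ obtained from periodicity ($\dot q$ vanishes at some instant). Only then is averaging invoked, and only the classical finite-interval theorem on a compact set: any solution of the averaged system starting near $q=\pi/2$ or $q=3\pi/2$ with $|p|\leqslant 2c$ leaves the strip in bounded slow time (forward or backward), hence for $k$ large the periodic solution of the modified system cannot enter the modification zone and is therefore a solution of the original system \eqref{eq2}. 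If you want to keep your block construction, you should either adopt this cutoff trick or carry out genuine second-order averaging before setting up the exit set; as written, the immediate-exit claim is the missing ingredient.
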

\begin{proof}

The main idea of the proof is to consider a modified system that differs from \eqref{eq2} on a compact subset of the extended phase space. Then, from the theory of upper and lower solutions for second order boundary value problems, one can show that the modified system has a periodic solution with the required properties. Finally, we show that, for $k$ sufficiently large, this periodic solution cannot go through the region of modification. Therefore, this solution exists in the original system. Now we turn to the details of the proof.

Let us consider the following modification of system \eqref{eq2}
\begin{align}
\begin{split}
\label{eq4}
    & \dot q = p - \sigma(q) \varphi(kt) \sin q,\\
    & \dot p = -\mu p + \sigma(q) [\mu \sin q + p \cos q]  \varphi(kt) - \sin q - \frac{\varphi^2(kt)}{2} \sin 2q + h(q,p,t).
\end{split}
\end{align}
Here $\sigma$ is a smooth function such that $|\sigma| \leqslant 1$. 
Suppose that this system has a $T$-periodic solution. First, let us show that for this solution there exists an \textit{a priori} estimation for $p(t)$, independent on $k$ and the form of function $\sigma$. We will use this estimation below for averaging.

Let $q(t)$ be a $T$-periodic solution. From the periodicity, we have that for some $t' \in [0,T]$ the derivative equals zero: $\dot q(t') = 0$. From the first equation of system \eqref{eq4} we obtain that  $|p(t')| \leqslant c_1$, where $c_1$ depend only on the maximum value of function $|\varphi|$. From the second equation of the system we have $\dot p \leqslant c_2 p + c_3$ and $\dot p \geqslant -c_2 p - c_3$, where non-negative constants $c_2$ and $c_3$ depends on $\mu$ and maximum values of $|\varphi|$ и $|h|$. Hence, the maximum of $|p(t)|$ is less than some constant $c$ that can be expressed by means of the quantities $c_1$, $c_2$, and $c_3$.

Introducing a small parameter $\varepsilon = 1/k$, let us rewrite system \eqref{eq4} in a standard form used for averaging
\begin{align}
\begin{split}
\label{eq5}
    & q' = \varepsilon (p - \sigma(q) \varphi(\tau) \sin q),\\
    & p' = \varepsilon(-\mu p + \sigma(q) [\mu \sin q + p \cos q]  \varphi(\tau) - \sin q - \frac{\varphi^2(\tau)}{2} \sin 2q + h(q,p,t)),\\
    & t' = \varepsilon.
\end{split}
\end{align}
Here $\tau = tk$, and $(\cdot)' = d/d\tau$.
The averaged system takes the form
\begin{align}
\begin{split}
\label{eq6}
    & q' = \varepsilon p,\\
    & p' = \varepsilon(-\mu p - \sin q - \frac{\Phi}{2} \sin 2q + h(q,p,t)),\\
    & t' = \varepsilon,
\end{split}
\end{align}
where $\Phi$ is the average value of function $\varphi^2(\tau)$ over its period. Let $\Delta > 0$ and $\delta > 0$ be such numbers that there exists $L > 0$ and for any initial condition $(q_0, p_0, t_0)$ satisfying 
$$
t_0 \in [0,T], \quad p_0 \in [-2c,2c], \quad q_0 \in [\pi/2,\pi/2 + \delta] \cup [3\pi/2 - \delta,3\pi/2]
$$
for some $l \in [0, L]$ for the corresponding solution of \eqref{eq6} one of the following conditions holds:
\begin{align*}
    &q(t_0 + l/\varepsilon) \leqslant \pi/2 - \Delta,\\
    &q(t_0 + l/\varepsilon) \geqslant 3\pi/2 + \Delta,\\
    &q(t_0 - l/\varepsilon) \leqslant \pi/2 - \Delta,\\
    &q(t_0 - l/\varepsilon) \geqslant 3\pi/2 + \Delta.
\end{align*}
In other words, any solution of the averaged system starting near the boundary of the set $q \in [\pi/2, 3\pi/2]$ leaves (in direct or reversed time) $\varepsilon$-neighborhood (w.r.t. $q$) of this subset of the extended phase space in time $\tau$ that is less or equal to $L$. Note that $c$ here is the constant that was obtained as an \textit{a priori} estimation for $|p(t)|$ of a $T$-periodic solution of the modified system.

The existence of such $\Delta$ and $\delta$ easily follows from the Taylor expansion for solutions of system \eqref{eq6} with the corresponding initial conditions $t_0 \in [0,T]$, $p_0 \in [-2c,2c]$ and $q_0 = \pi/2$ or $q_0 = 3\pi/2$. In particular, the condition \eqref{eq3} is used in this part of the proof.

Let us consider function $\sigma(q)$ of the following form:
$$
\sigma(q)=
\begin{cases}
& 0, \quad q \in [-\delta/2 + \pi/2, \pi/2 + \delta/2] \cup [-\delta/2 + 3\pi/2, 3\pi/2 + \delta/2],\\
& 1, \quad q \not\in [-\delta + \pi/2, \pi/2 + \delta] \cup [-\delta + 3\pi/2, 3\pi/2 + \delta],\\
&\mbox{monotonous elsewhere}.
\end{cases}
$$
Now we show that, for this $\sigma$ and for any given natural number $k$, system \eqref{eq4} admits a $T$-periodic solution. From \eqref{eq4} we have
\begin{align}
\begin{split}
\label{eq7}
    \ddot q = &-\frac{\partial \sigma}{\partial q}\dot q\varphi(kt)\sin q + \sigma(q) \frac{\partial \varphi(kt)}{\partial t} \sin q + \sigma(q) \varphi(kt) \cos q \dot q \\
    &-\mu [\dot q + \sigma(q) \varphi(kt) \sin q] + \sigma(q) [\mu \sin q + (\dot q + \sigma(q) \varphi(kt) \sin q)\cos q]\varphi(kt)\\
    &-\sin q - \frac{1}{2} \varphi^2(kt) \sin 2q + h(q,\dot q + \sigma(q) \varphi(kt) \sin q,t).
\end{split}
\end{align}
By a simple direct calculation one can check that $q = \pi/2$ and $q = 3\pi/2$ are lower and upper solutions (see, for instance, \cite{de2006two} or \cite{bernfeld1974introduction}) for our system \eqref{eq7}. Therefore, there exists a $T$-periodic solution and $q(t) \in (\pi/2,3\pi/2)$ for $t$.

Finally, let us show that, for sufficiently large $k$, this periodic solution cannot go through the points where $\sigma \ne 0$. Indeed, let $M$ be a compact of the form
$$
M = \{ q,p,t \colon -1 + \pi/2 \leqslant q \leqslant 1 + 3\pi/2, -2c \leqslant p \leqslant 2c, -1 \leqslant t \leqslant T + 1 \}.
$$
We can apply a classical theorem on averaging on a finite time interval for compacts \cite{sanders2007averaging}. From this theorem, we have that for large $k$ any solution of \eqref{eq7} that goes through a point where $\sigma \ne 0$ either leaves the interval $q \in (\pi/2, 3\pi/2)$ in time less or equal to $L$, or this solution was outside this interval earlier.

This contradicts the fact that $q(t) \in (\pi/2,3\pi/2)$. Therefore, we can conclude that the same periodic solution exists in the original system.

\end{proof}

When function $h$ satisfies some additional conditions, it is possible to prove that there are at least two periodic solutions of \eqref{eq2}, provided $k$ is large enough. Similar to the above, each of these solutions satisfies condition $q(t) \in (\pi/2,3\pi/2)$.

Indeed, condition \eqref{eq3} is the cornerstone of the proof of Theorem 2.1. This condition allows one to prove that there exists a periodic solution that always remains in interval $(\pi/2, 3\pi/2)$. To be more precise, we use that for function 
$$
f(q) = -\sin q - \frac{\Phi}{2} \sin 2q
$$
we have $f(\pi/2) = -1$ and $f(3\pi/2) = 1$, i.e. the values of the function have different signs at the ends of the interval. If $\Phi > 1$, then function $f$ has two local maxima inside the interval $(\pi/2, 3\pi/2)$ and $f > 0$ at these points; $f$ also has two local minima where $f < 0$.

Let us introduce the following notations
$$
\lambda_1 = \frac{-1 + \sqrt{1 + 8\Phi^2}}{4\Phi}, \quad \lambda_2 = \frac{-1 - \sqrt{1 + 8\Phi^2}}{4\Phi}.
$$
For $\Phi \in (0, 1)$ two critical points (inside $[0, 2\pi]$) of $f(q)$ are as follows
$$
q_{min}^1 = \mathrm{arccos}(\lambda_1), \quad q_{max}^1 = 2\pi - \mathrm{arccos}(\lambda_1).
$$
As $\Phi$ tends to $0$, value $q_{min}^1$ tends to $\pi/2$ and $q_{max}^1$ tends to $3\pi/2$. As $\Phi$ tends to $\infty$, $q_{min}^1$ tends to $\pi/4$ and $q_{max}^1$ tends to $7\pi/4$. If $\Phi > 1$, then we have two more additional critical points
$$
q_{max}^2 = \mathrm{arccos}(\lambda_2), \quad q_{min}^2 = 2\pi - \mathrm{arccos}(\lambda_2).
$$
As $\Phi$ tends to $1$, $q_{min}^2$ tends to $\pi$ and $q_{max}^2$ also tends to $\pi$. As $\Phi$ tends to $\infty$, $q_{min}^2$ tends to $5\pi/4$ and $q_{max}^2 $ tends $ 3\pi/4$. 

Similarly to Theorem 2.1, we can consider two intervals $[\pi/2, q_{max}^2]$ and $[q_{min}^2, 3\pi/2]$ independently and prove the following multiplicity result.

\begin{theorem}
Let $\Phi > 1$ and $h$ satisfy the following conditions for all $t$
\begin{align}
    h(t,\pi/2,0) < 1, \quad h(t, 3\pi/2,0) > -1, \quad h(t,q_{max}^2,0) > -f(q_{max}^2), \quad h(t,q_{min}^2,0) < -f(q_{min}^2).
\end{align}
Then there exists $K$ such that for any $k > K$, $k \in \mathbb{N}$ there are two $T$-periodic solutions $q_1(t)$ and $q_2(t)$ satisfying $q_1(t) \in (\pi/2, q_{max}^2)$ and $q_2(t) \in (q_{min}^2, 3\pi/2)$ for all $t$.
\end{theorem}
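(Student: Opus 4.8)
The plan is to mimic the proof of Theorem 2.1 almost verbatim, but applied separately on each of the two invariant-target intervals $I_1 = [\pi/2, q_{max}^2]$ and $I_2 = [q_{min}^2, 3\pi/2]$. First I would note that since $\Phi > 1$, the function $f(q) = -\sin q - \tfrac{\Phi}{2}\sin 2q$ satisfies $f(\pi/2) = -1 < 0$ and $f(q_{max}^2) = -f(q_{min}^2)\cdot(-1)$... more precisely, at the interior critical point $q_{max}^2 = \arccos(\lambda_2)$ we have a local maximum with $f(q_{max}^2) > 0$, and at $q_{min}^2 = 2\pi - \arccos(\lambda_2)$ a local minimum with $f(q_{min}^2) < 0$. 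Hence on $I_1$ the hypotheses $h(t,\pi/2,0) < 1 = -f(\pi/2)$ and $h(t,q_{max}^2,0) > -f(q_{max}^2)$ say exactly that $f(q) + h$ has the right sign at both endpoints of $I_1$ (pushing inward), and symmetrically on $I_2$. This is the analogue of \eqref{eq3}.

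Then, for each interval $I_j$ separately I would carry out the same three-step argument. Step one: introduce a cutoff $\sigma_j(q)$ that vanishes in $\delta$-neighborhoods of the two endpoints of $I_j$ and equals $1$ elsewhere, forming the modified system \eqref{eq4} with $\sigma_j$ in place of $\sigma$. Establish the $k$-independent \emph{a priori} bound $|p(t)| \leqslant c$ for any $T$-periodic solution of this modified system, exactly as before (the estimate only used boundedness of $\varphi$ and $h$, not the interval). Step two: the averaged system \eqref{eq6} has $q = $ (left endpoint of $I_j$) and $q = $ (right endpoint of $I_j$) as strict lower and upper solutions for the second-order equation \eqref{eq7}, because the sign conditions on $h$ at those endpoints guarantee $\ddot q$ points inward there (at $q$ equal to an endpoint one has $\dot q = p$, and plugging into \eqref{eq7} with $\sigma_j = 0$ there yields $\ddot q = f(q) + h(q,p,t)$ with the correct sign). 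By the theory of upper and lower solutions for periodic boundary value problems, the modified system admits a $T$-periodic solution $q_j(t)$ with $q_j(t) \in (\text{int } I_j)$ for all $t$. Step three: choosing $\Delta, \delta, L$ from a Taylor-expansion / transversality argument on the averaged flow near the endpoints of $I_j$ (again using the strict endpoint inequalities), apply the classical averaging theorem on a finite time interval over the compact $M_j$ to conclude that for $k$ large this periodic solution never enters the region where $\sigma_j \neq 0$; otherwise it would have to leave $I_j$ in time $\leqslant L$, contradicting $q_j(t) \in \text{int } I_j$. Hence $q_j$ is a genuine $T$-periodic solution of \eqref{eq2}.

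Finally I would observe that $I_1$ and $I_2$ are disjoint as \emph{target regions} in the sense that $q_1(t) \in (\pi/2, q_{max}^2)$ while $q_2(t) \in (q_{min}^2, 3\pi/2)$, and since $\Phi > 1$ forces $q_{max}^2 < \pi < q_{min}^2$ (as recorded in the excerpt, $q_{max}^2 \to \pi$ and $q_{min}^2 \to \pi$ only as $\Phi \to 1^+$, and they separate for larger $\Phi$), the two solutions satisfy $q_1(t) < \pi < q_2(t)$ identically, so they are distinct. Taking $K = \max(K_1, K_2)$ from the two applications finishes the proof.

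The main obstacle is purely bookkeeping rather than conceptual: one must check that the endpoint conditions genuinely yield \emph{strict} lower/upper solutions of the nonautonomous second-order equation \eqref{eq7} with $\sigma_j \equiv 0$ near the endpoints — i.e. that the full term $f(q) + h(q, \dot q + \sigma_j \varphi \sin q, t) = f(q) + h(q, p, t)$ (since $\sigma_j$ vanishes there) has the strictly correct sign uniformly in $t$ and in the bounded range of $p$. At an interior critical point of $f$ like $q_{max}^2$, one only knows the sign of $f$ at the point itself, so a little care is needed to see that the lower-solution inequality $\ddot q \geqslant$ (or $\leqslant$) the vector field holds on a one-sided neighborhood; this is where one either works with constant lower/upper solutions at the exact critical values (using that $h$ is evaluated there) or slightly shrinks the interval. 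Everything else transfers from the proof of Theorem 2.1 with only notational changes.
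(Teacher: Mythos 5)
Your overall strategy --- repeat the proof of Theorem 2.1 separately on $[\pi/2, q_{max}^2]$ and $[q_{min}^2, 3\pi/2]$ and note that $q_{max}^2 < \pi < q_{min}^2$ for $\Phi>1$ makes the two solutions distinct --- is exactly what the paper intends (the paper gives nothing beyond ``similarly to Theorem 2.1''). However, your Step two has a genuine gap at the two interior endpoints. You claim that at $q = q_{max}^2$ (or $q_{min}^2$), with $\sigma_j = 0$ there, equation \eqref{eq7} reduces to $\ddot q = f(q) + h(q,p,t)$, so that the hypothesis $h(t,q_{max}^2,0) > -f(q_{max}^2)$ makes the constant function an upper/lower solution. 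That is not what \eqref{eq7} says: the cutoff $\sigma$ in \eqref{eq4} multiplies only the terms linear in $\varphi(kt)$, not the term $\tfrac{1}{2}\varphi^2(kt)\sin 2q$, so with $\sigma_j = 0$ one gets $\ddot q = -\mu\dot q - \sin q - \tfrac{1}{2}\varphi^2(kt)\sin 2q + h$, with the \emph{instantaneous} coefficient $\varphi^2(kt)$ rather than its mean $\Phi$. In Theorem 2.1 this distinction is invisible because $\sin 2q = 0$ at $\pi/2$ and $3\pi/2$; at $q_{max}^2$ and $q_{min}^2$ it is not. Since $\sin 2q_{max}^2 < 0$, one has $-f(q_{max}^2) < \sin q_{max}^2$, so the hypothesis allows $h(t,q_{max}^2,0) < \sin q_{max}^2$; at the instants where $\varphi(kt) = 0$ (which occur, since $\varphi$ is continuous with zero mean) the right-hand side at $(q_{max}^2,0)$ is then $-\sin q_{max}^2 + h < 0$, the wrong sign. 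So the constants $q_{max}^2$, $q_{min}^2$ are in general \emph{not} upper/lower solutions of \eqref{eq7} with your cutoff, and the periodic solution of the modified system is not produced. Note also that your closing ``main obstacle'' paragraph worries about the wrong thing: a constant upper/lower solution only needs the inequality at the point itself, uniformly in $t$, so the one-sided neighborhood of the critical point of $f$ is irrelevant; the instantaneous-versus-averaged coefficient is the real issue.

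The repair stays entirely within the paper's scheme: enlarge the modification so that in the $\delta$-neighborhoods of $q_{max}^2$ and $q_{min}^2$ the coefficient $\varphi^2(kt)$ is also replaced by its average, e.g.\ use $\sigma(q)\varphi^2(kt) + (1-\sigma(q))\Phi$ in place of $\varphi^2(kt)$. This does not spoil the \textit{a priori} bound on $p$ (the modified terms remain bounded uniformly in $k$), it leaves the averaged system \eqref{eq6} unchanged, and now the endpoint hypotheses do yield constant lower/upper solutions of the modified second-order equation, hence a $T$-periodic solution inside each interval. Your Step three then goes through verbatim: the endpoint inequalities, now read as sign conditions on the \emph{averaged} field $f+h$ at $\pi/2$, $q_{max}^2$, $q_{min}^2$, $3\pi/2$, give the exit-time constants $\Delta,\delta,L$, and the finite-interval averaging theorem shows that for large $k$ the periodic solution never enters the modification region, so it solves \eqref{eq2}. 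With that adjustment your argument is correct and coincides with the paper's intended proof.
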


\section{Numerical results}

In this section we present some results concerning the study of stability of solutions without falls for the Kapitza-Whitney pendulum. To be more precise, we show numerically that stable and asymptotically stable periodic solutions without falls exist.

We will consider the following system
\begin{align*}
    & \dot q = p - \dot f \sin q,\\
    & \dot p = -\mu p + [\mu \sin q + p \cos q] \dot f - \sin q - \frac{\dot f^2}{2} \sin 2q + h(t) \cos q.
\end{align*}
Here $f(t) = \frac{1}{k}\sin(\omega k t)$ and assume that the external horizontal force has the form
\begin{align*}
    h(t) = c + A \sin (t).
\end{align*}
Below one can find asymptotically stable $2\pi$-periodic solutions for given $c$, $A$, $k$, $\omega$ and $\mu$ (Fig. 1 and Fig. 2). These solutions are limit cycles for the system. For each of these cycles we can conclude, based on the results of calculations, that the solution satisfying $q(0) = \pi$ and $p(0) = 0$ asymptotically tends to the corresponding one-dimensional invariant manifolds. We also present corresponding periodic solutions of the averaged system, i.e. the trajectories that correspond to the case $k = \infty$ (formally).
\begin{figure}[h!]
\begin{subfigure}{.5\textwidth}
  \centering
  \includegraphics[width=1.0\linewidth]{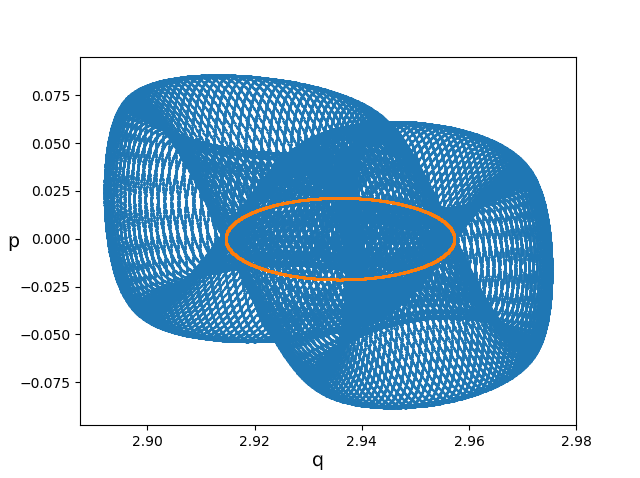}
  \caption{$k = 10$, $\omega = 10$, $A = 1$, $c=10$, $\mu = 1$.}
  \label{fig:sfig2}
\end{subfigure}%
\begin{subfigure}{.5\textwidth}
  \centering
  \includegraphics[width=1.0\linewidth]{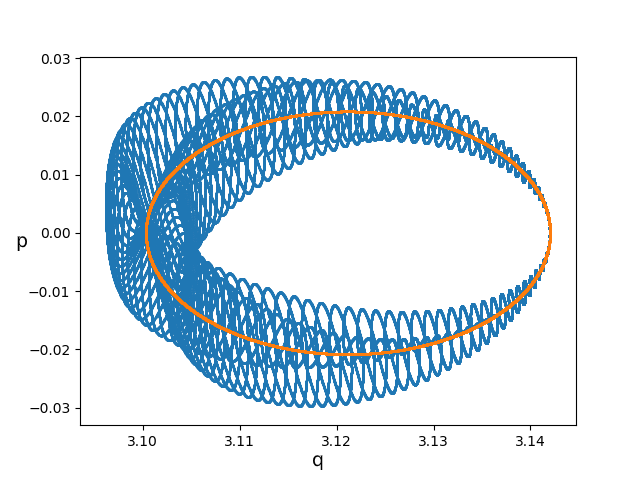}
  \caption{$k = 10$, $\omega = 10$, $A = 1$, $c=1$, $\mu = 1$.}
  \label{fig:sfig1}
\end{subfigure}
\caption{Asymptotically stable $2\pi$-periodic solutions and the corresponding solutions of the averaged system (highlighted). Along these solutions, the rod of the pendulum never becomes horizontal, i.e. $q(t) \in (\pi/2, 3\pi/2)$.}
\end{figure}

\begin{figure}[h!]
\begin{subfigure}{.5\textwidth}
  \centering
  \includegraphics[width=1.0\linewidth]{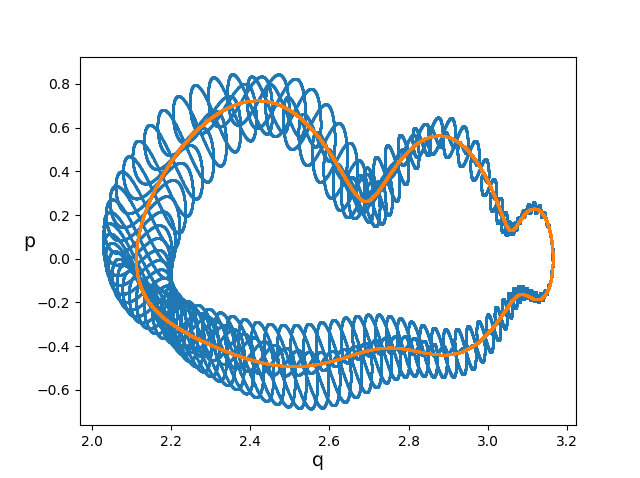}
  \caption{$k = 10$, $\omega = 10$, $A = 20$, $c=20$, $\mu = 1$.}
  \label{fig:sfig2}
\end{subfigure}%
\begin{subfigure}{.5\textwidth}
  \centering
  \includegraphics[width=1.0\linewidth]{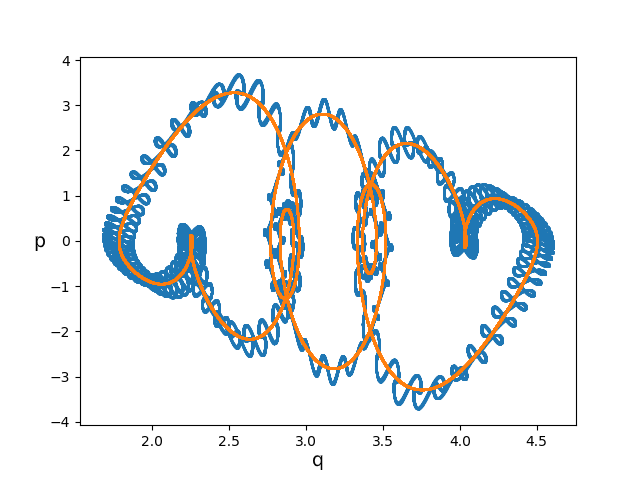}
  \caption{$k = 10$, $\omega = 15$, $A = 100$, $c=0$, $\mu = 1$.}
  \label{fig:sfig1}
\end{subfigure}\\
\begin{subfigure}{.5\textwidth}
  \centering
  \includegraphics[width=1.0\linewidth]{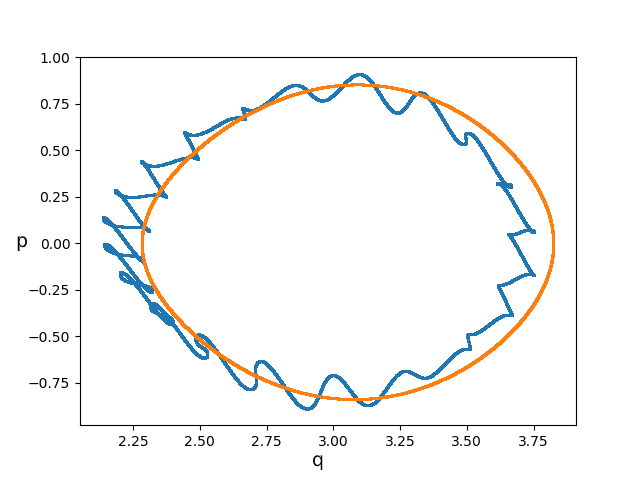}
  \caption{$k = 10$, $\omega = 2$, $A = 1$, $c=0$, $\mu = 1$.}
  \label{fig:sfig2}
\end{subfigure}%
\begin{subfigure}{.5\textwidth}
  \centering
  \includegraphics[width=1.0\linewidth]{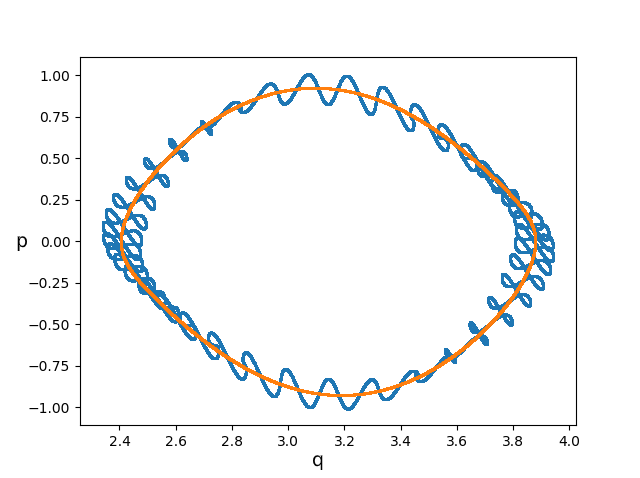}
  \caption{$k = 10$, $\omega = 4$, $A = 4$, $c=0$, $\mu = 1$.}
  \label{fig:sfig1}
\end{subfigure}\\
\label{figEarth}
\caption{Asymptotically stable $2\pi$-periodic solutions and the corresponding solutions of the averaged system (highlighted). Along these solutions, the rod of the pendulum never becomes horizontal, i.e. $q(t) \in (\pi/2, 3\pi/2)$.}
\end{figure}

When there is no friction in the system, solutions cannot be asymptotically stable. Nevertheless, it is still possible to study the stability based on the view of the Poincar{\'e} section in a neighborhood of a periodic solution.

\begin{figure}[h!]
\centering
\begin{subfigure}{.46\textwidth}
  \centering
  \includegraphics[width=1.0\linewidth]{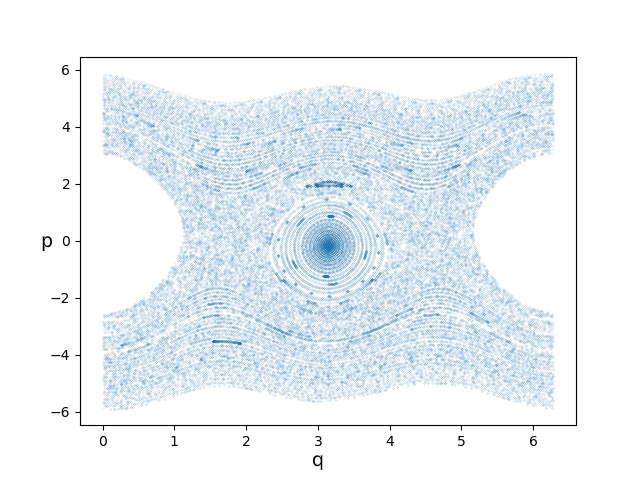}
  \caption{}
  \label{fig:sfig2}
\end{subfigure}%
\begin{subfigure}{.46\textwidth}
  \centering
  \includegraphics[width=1.0\linewidth]{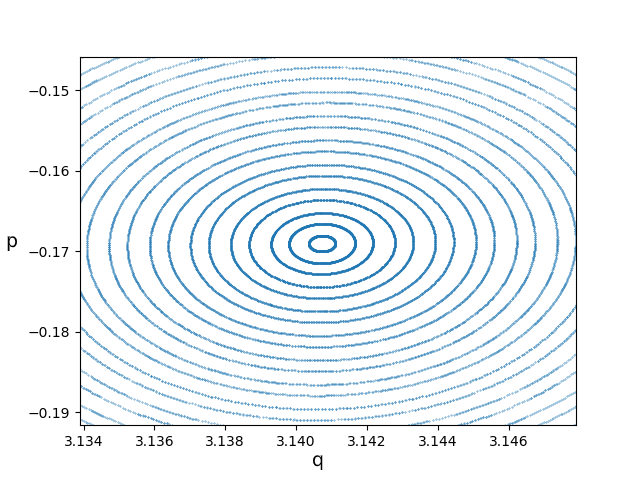}
  \caption{}
  \label{fig:sfig1}
\end{subfigure}\\
\begin{subfigure}{.46\textwidth}
  \centering
  \includegraphics[width=1.0\linewidth]{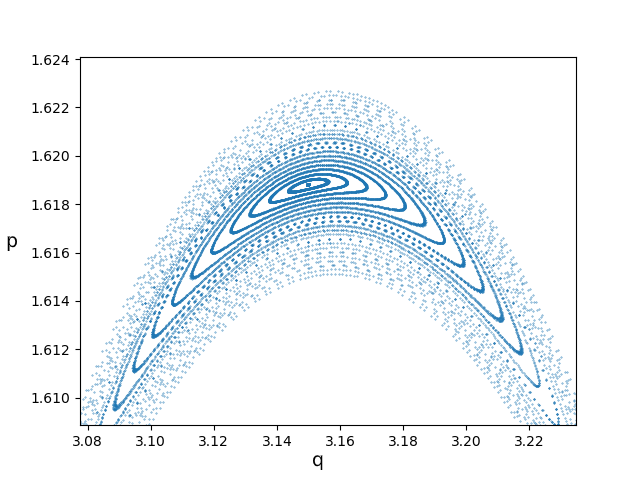}
  \caption{}
  \label{fig:sfig2}
\end{subfigure}%
\begin{subfigure}{.46\textwidth}
  \centering
  \includegraphics[width=1.0\linewidth]{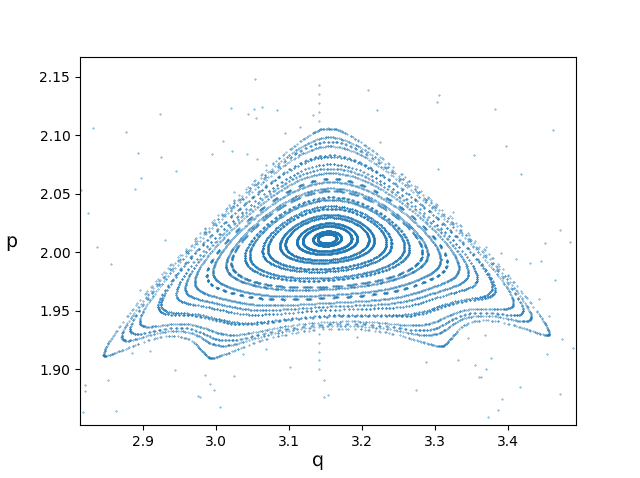}
  \caption{}
  \label{fig:sfig1}
\end{subfigure}
\label{figEarth}
\caption{Poincar{\'e} sections for $\mu = 0$, $k = 10$, $\omega = 4$, $c = 0$, $a = 1$, $A = 1$. Subfigures (b), (c) and (d) represent enlarged regions of (a).}
\end{figure}

\begin{figure}[h!]
\centering
\begin{subfigure}{.46\textwidth}
  \centering
  \includegraphics[width=1.0\linewidth]{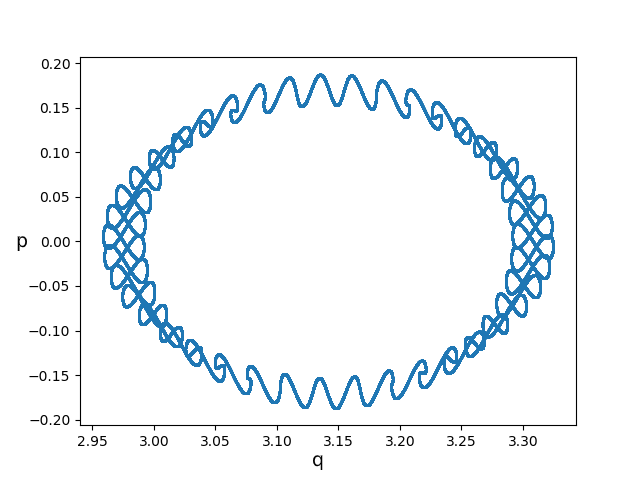}
  \caption{$q(0) \approx 3.14$, $p(0) \approx -0.16$}
  \label{fig:sfig2}
\end{subfigure}%
\begin{subfigure}{.46\textwidth}
  \centering
  \includegraphics[width=1.0\linewidth]{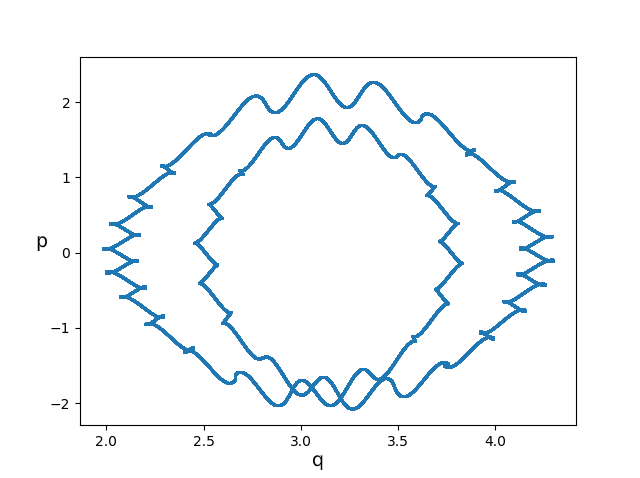}
  \caption{$q(0) \approx 3.15$, $p(0) \approx 1.61$}
  \label{fig:sfig1}
\end{subfigure}
\label{figEarth}
\caption{Periodic solutions without falls for $\mu = 0$, $k = 10$, $\omega = 4$, $c = 0$, $a = 1$, $A = 1$. }
\end{figure}

On Fig. 3 one can find neighborhoods of three $2\pi$-periodic stable solutions. Just three of them are solutions without falls (Fig. 4).

In conclusion, we consider a few more asymptotically stable $2\pi$-periodic solutions that can be obtained as follows. Note that, for any given motion $q(t)$ of the pendulum such that $q(t) \in (\pi/2,3\pi/2)$ for all $t$, it is always possible to choose such an external force $h(t)$ that $q(t)$ is a solution of the corresponding equations:

\begin{align}
\label{eq8}
    h(t) = \frac{1}{\cos q} \left( \dot p + \mu p + [\mu \sin q + p\cos q] \dot f + \sin q + \frac{\dot f^2}{2}\sin 2q \right).
\end{align}

For instance, we will consider motions of the pendulum of the following form

$$
q(t) = A \sin (t).
$$
Let the coefficient of friction equal $1$. Then the system is fully described by the following set of parameters: $a$, $\omega$, $A$, and $1/k$ (small parameter).

It is worth to mention that from \eqref{eq8} it follows that there is a term proportional to $k$ (large parameter) in the expression for $h(t)$. Hence, the classical results on averaging cannot be applied for this system. In particular, Theorems 2.1 and 2.2 also cannot be applied. However, generalizations of these theorems are out of the scope of this paper and will be developed elsewhere. Nevertheless, below we present some numerical examples of asymptotically stable $2\pi$-periodic solutions without falls (Fig. 5).

\begin{figure}[h!]
\begin{subfigure}{.5\textwidth}
  \centering
  \includegraphics[width=1.0\linewidth]{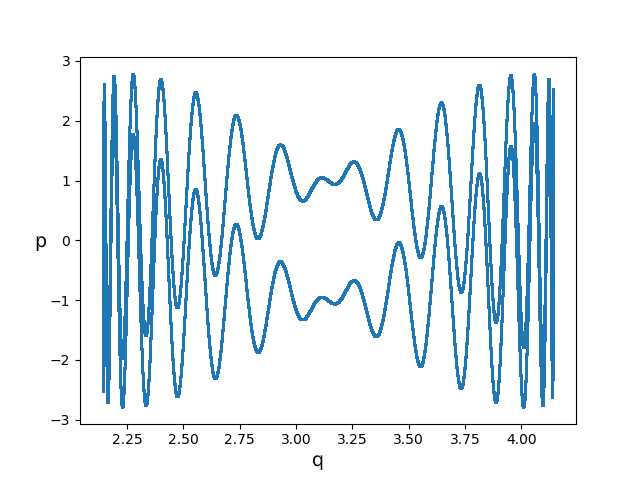}
  \caption{$k = 10$, $\omega = 3$, $a = 1$, $A = 1$.}
  \label{fig:sfig2}
\end{subfigure}%
\begin{subfigure}{.5\textwidth}
  \centering
  \includegraphics[width=1.0\linewidth]{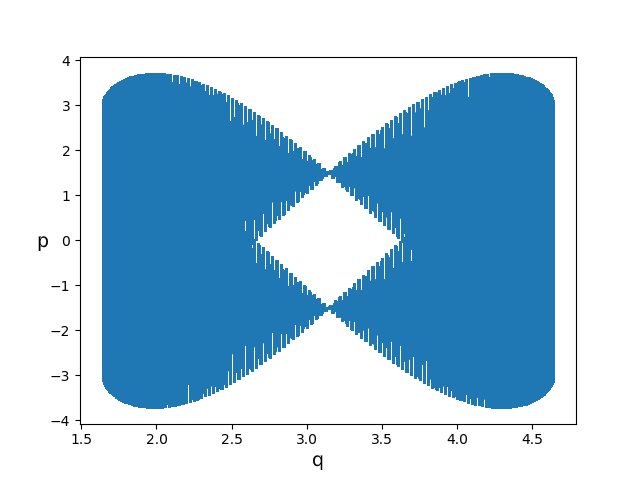}
  \caption{$k = 100$, $\omega = 3$, $a = 1$, $A = 3/2$.}
  \label{fig:sfig1}
\end{subfigure}\\
\begin{subfigure}{.5\textwidth}
  \centering
  \includegraphics[width=1.0\linewidth]{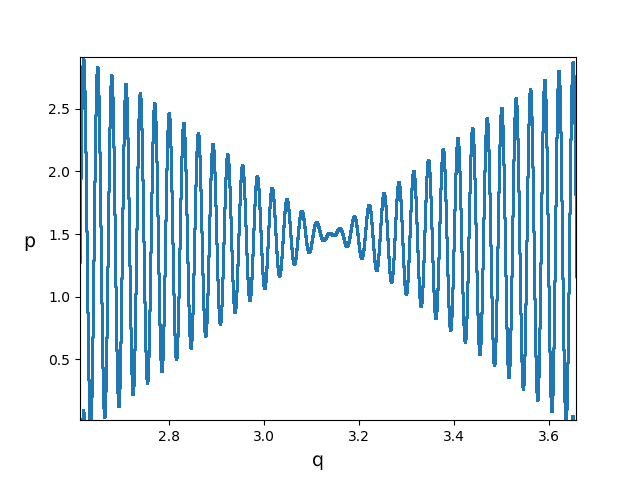}
  \caption{$k = 100$, $\omega = 3$, $a = 1$, $A = 3/2$ (an enlarged region).}
  \label{fig:sfig2}
\end{subfigure}%
\begin{subfigure}{.5\textwidth}
  \centering
  \includegraphics[width=1.0\linewidth]{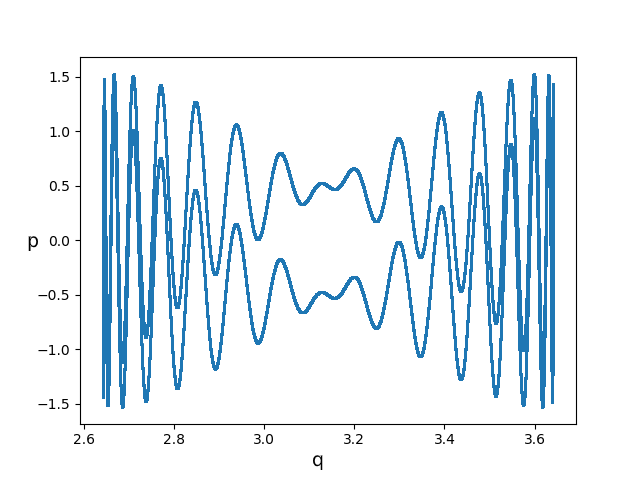}
  \caption{$k = 10$, $\omega = 3$, $a = 1$, $A = 1/2$.}
  \label{fig:sfig1}
\end{subfigure}\\
\label{figEarth}
\caption{Asymptotically stable solutions of the form $A \sin(t)$.}
\end{figure}

\section{Conclusion and remarks}

From the classical results on averaging, it follows that the solutions of the original and the averaged problems, starting at the same point, remain $\varepsilon$-close on some finite time interval, provided $\varepsilon$ is sufficiently small \cite{bogolyubov1961asymptotic}. For an infinite time interval, this statement does not hold. In other words, in a general case, the original and averaged solutions may drift apart significantly for any given positive $\varepsilon$.

In \cite{bogolyubov1961asymptotic} N.N. Bogolyubov and Y. A. Mitropolskij wrote: `\textit{One can, for instance, try to find conditions under which the difference between the exact solution and its asymptotic approximation, for small values of the parameter, becomes arbitrarily small on an arbitrarily long, yet finite, time interval. It is also possible to consider far more difficult problems trying to find a correspondence between such properties of the exact and asymptotic solutions that depends on their behavior on an infinite time interval.}'

The main result on the averaging on an infinite interval is the theorem that states that in a vicinity of a hyperbolic equilibrium of the averaged system there exists a solution of the original system. As it can be seen in the above figures and as it is understood from the nature of the method of averaging, the solution of the original system can be considered as a solution of the averaged system plus some perturbation, that may have a significant impact on a long time interval (see, for instance, \cite{cox2020ponderomotive,yang2020averaging}). 
Under some additional assumptions, the solution of the original system will be periodic. It is important to note that the solutions of the original and averaged system are not assumed to have the same initial conditions.

Here we can see an analogy between this classical result and Theorem 2.1. Let function $h$ in system \eqref{eq4} does not depend on $t$ and the assumptions of Theorem 2.1 hold. Then we obtain that in some (possibly, not small) vicinity of the vertical equilibrium there exists a periodic solution. If we additionally assume that $h(\pi,0)=0$, then we can say that in some vicinity of the vertical equilibrium there is a periodic solution. Moreover, this vicinity can be chosen to be arbitrarily small, provided the norm of $h$ is relatively small.

Note that the conditions, that we impose on the system in order to apply the theorem on the upper and lower solutions, can be considered as an analogue for the hyperbolicity of a solution. The same conditions play the key role when one wants to apply the so-called topological Wa\.zewski method for similar systems \cite{polekhin2015forced,polekhin2016forced,polekhin2018impossibility,polekhin2018topological,srzednicki2019periodic}. In this context, some subset of the extended phase space is called an isolating segment for a given system of equations. An important property of isolating segments is that the existence of a periodic solution depends only on the behavior of the vector field at the boundary of these segments (and does not depend on the vector field inside the segment). This allows us to obtain results on averaging on an infinite time interval based only on the results on local averaging.

The presented method of proof of the existence of periodic solutions in no sense can be regarded as constructive. Therefore, speaking formally, we cannot claim that the stable periodic solutions (that were found numerically) and the solutions, the existence of which is proved in Theorems 2.1 and 2.2, are actually different solutions. However, for the case when $\mu = 0$, the periodic solutions can be found based on the variational approach by means of the gradient descent method. And it can be shown that these solutions are always unstable \cite{bolotin2015calculus}. In particular, every solution obtained from Theorem 2.2 will be unstable, provided there is no external horizontal force. Therefore, the question of whether there always exists a stable periodic solution without falls is a quite important area of future research. Or, to be more precise, the question is to find sufficient conditions for the existence of stable periodic solutions without falls. Note that the so-called nodding solutions mentioned above are firstly, stable and secondly, can be solutions without falls \cite{acheson1995multiple,butikov2001dynamic}.

Another possible area for future research is the study of stability of given periodic solutions without falls. As it was mentioned above, in this case we cannot apply the classical results on averaging. However, there exist several results on averaging for systems with a large parameter (see, for instance, \cite{volosov1961method,levenshtam2005asymptotic,levenshtam2005asymptotic2}) that can be applied to this problems and also to the problem of averaging on an infinite time interval when function $h$ is not periodic in $t$.

In conclusion, we would like to note that the presented approach can be carried over directly to other pendulum-like systems. The main requirement for the application of this method is the possibility to prove the existence of periodic solutions based on the behavior of the vector field in a vicinity of some subset of the extended phase space. In the above considerations we used the method of upper and lower solutions. This method can be replaced with results based on the Wa\.zewski method \cite{wazewski1947principe} and the Lefschetz-Hopf theorem \cite{srzednicki1994periodic,srzednicki2005fixed} or one can use here the dynamical convexity of our system \cite{bolotin2015calculus}.

\section*{Acknowledgments}
The work has been supported by the Grant of the President of the Russian Federation (Project MK-1826.2020.1).

\bibliographystyle{model1-num-names}
\bibliography{sample.bib}

\end{document}